 \theoremstyle{plain}
\newtheorem{proposition}{Proposition}
\theoremstyle{definition}
\newtheorem{theorem}{Theorem}
\def\alf{{\mathcal L}_{\alpha,1}}
\def\als{{\mathcal L}_{\alpha,2}}
\def\aff{{\mathcal A}_{\alpha,1}}
\def\afs{{\mathcal A}_{\alpha,2}}
  \title[ Dense Asymptotic   Lines on Tori  Embedded  in  $\mathbb S^3$
]{Tori Embedded in $\mathbb S^3$ with Dense   Asymptotic   Lines}
 \author{ Ronaldo Garcia and Jorge Sotomayor }
\begin{document}
 \maketitle

   \begin{abstract} In this paper are   given examples of
tori  $\mathbb T^2$  embedded  in $\mathbb S^3$
with  all their asymptotic lines
 dense.
\vskip .2cm
 \noindent {\em Key words:}   asymptotic lines, recurrence,  Clifford torus,
variational equation. \;\;
MSC: 53C12, 34D30,  53A05, 37C75.
 \end{abstract}

\section{ Introduction}

Let $\alpha:\mathbb M \to\mathbb S^3$ be
an immersion of class
 $ C^{r }, r\geq 3,$
of a smooth, compact and oriented two-dimensional
manifold $\mathbb M  $ into  the three dimensional sphere
${\mathbb S}^{3}$ endowed with the canonical inner product $<.,.>$
of $\mathbb R^4$.

 The {\em Fundamental Forms} of $\alpha $ at
a point $p$ of ${\mathbb M}$ are the symmetric bilinear forms on
${\mathbb T}_p \mathbb M$ defined as follows (Spivak 1999):

 $$\aligned I_{\alpha }(p;v,w)=& <D\alpha
(p;v),D\alpha (p;w)>,\\
 II_{\alpha}(p;v,w)=& <-DN_{\alpha}(p;v),D\alpha(p;w)>.\endaligned
$$

 \noindent Here,  $N_{\alpha }$ is the positive unit normal of the immersion $\alpha$ and $<N_\alpha,\alpha>=0$.

 Through every point $p$ of the {\em hyperbolic region}
 ${\mathbb H}_{\alpha }$ of the immersion $\alpha,
$ characterized by the condition that the extrinsic Gaussian
Curvature ${\mathcal K}_{ext}$
 $=\det(D{ N}_{\alpha})$ is negative,
pass
two transverse asymptotic lines of $\alpha$, tangent
to the two asymptotic directions through $p$. Assuming $r\geq 3$
this follows from the usual existence and uniqueness theorems on
Ordinary Differential Equations. In fact, on ${\mathbb H}_{\alpha
}$ the local line fields are defined by the kernels
 \;\; $\alf$,\;\;  $\als$\;\;  of the smooth one-forms  $
\omega_{ \alpha,1 },$ \; $ \omega_{ \alpha,2 }  $\;  which
locally split\;\,
 $II_{\alpha }$
 as the product of \,  $\omega_{\alpha,1}$
 and \,  $\omega_{\alpha,2}$.

The forms $\omega_{\alpha,i}$ are locally defined up to a non
vanishing factor and a permutation of their indices. Therefore,
their kernels and integral foliations are locally well defined
only up to a permutation of their indices.

Under the orientability hypothesis imposed on ${\mathbb M}, $ it
is possible to globalize, to the whole ${\mathbb H}_{\alpha }, $
the definition of the line fields $\alf,$ $\als$
 and of the choice of an ordering
between them, as established  in (Garcia and Sotomayor 1997)   and (Garcia et al. 1999).

 These two line fields, called the {\em asymptotic line fields} of
$\alpha $, are of class $C^{r-2}$ on ${\mathbb H}_{\alpha }$;
 they are distinctly defined together with the ordering between
them given by the subindexes $\{1,2\}$ which define their {\em
orientation ordering}: {\em ``1"}
for the {\em first asymptotic
line field} $\alf,$ {\em ``2"} for the {\em second asymptotic line
field} $\als$.

The {\em asymptotic foliations} of $\alpha $ are the integral
foliations $\aff$ of $ \alf$ and $\afs $ of $\als$; they fill out
the hyperbolic region ${\mathbb H}_{\alpha }$.

In a local chart $(u,v)$ the asymptotic directions of an immersion
$\alpha$ are defined by the implicit differential equation
$$II=e du^2+ 2f dudv+gdv^2=0.$$

In $\mathbb S^3$, with  the second fundamental form   relative to
the normal vector $N=\alpha\wedge \alpha_u\wedge \alpha_v$, it
follows that:
 $$ \text{\small $ e=\frac{  \det[\alpha, \alpha_u,\alpha_v,\alpha_{uu}]}{
\sqrt{   EG -F^2}}, \;
 f=\frac{  \det[\alpha, \alpha_u,\alpha_v,\alpha_{uv}]}{\sqrt{EG-F^2}},\;
g=\frac{ \det[\alpha,\alpha_u,\alpha_v,\alpha_{vv}]}{\sqrt{ EG-F^2
}}.$}  $$

There is a considerable difference between the cases of surfaces
in the  Euclidean and in the Spherical spaces. In $\mathbb R^3$
the asymptotic lines are never globally defined for immersions of
compact, oriented surfaces.
 This is due to the fact that in these surfaces there are always elliptic points, at which ${\mathcal K}_{ext} > 0$
 (Spivak  1999,   Vol. III, chapter 2, pg. 64).

The study of asymptotic  lines on  surfaces $\mathbb M$  of
$\mathbb R^3$ and $\mathbb S^3$ is a classical subject of
Differential Geometry. See  (do Carmo  1976,  chapter 3),
(Darboux  1896,  chapter II),  (Spivak 1999, vol. IV,
chapter 7, Part F)  and (Struik  1988, chapter 2).

 In (Garcia and Sotomayor  1997)   and (Garcia et al. 1999)
  ideas coming from the Qualitative Theory of Differential Equations and Dynamical Systems such as
 Structural Stability  and Recurrence were introduced into the subject of Asymptotic Lines.
 Other differential equations of Classical
 Geometry have been considered in  (Gutierrez and Sotomayor 1991); a recent survey can be found in
  (Garcia and Sotomayor  2008a).

 The interest  on  the  study  of foliations with dense
 leaves goes back to Poincar\'e, Birkhoff, Denjoy, Peixoto, among others.

In $\mathbb S^3$ the asymptotic lines can be globally defined, an
example is the Clifford torus, ${\mathcal C}=\mathbb S^1(r)\times
\mathbb S^1(r)\subset \mathbb S^3$, where $\mathbb
S^1(r)=\{(x,y)\in \mathbb R^2:\; x^2+y^2=r^2\}$ and
$r=\sqrt{2}/2$. In $\mathcal C$
 all asymptotic lines are closed curves, in fact,   Villarceau   circles.
  (See Villarceau 1848)
and  illustration in Fig. \ref{fig:villa}.

An asymptotic line $\gamma$ is called {\it  recurrent} if it is
contained in the hyperbolic region and     $\gamma\subseteq
L(\gamma)$, where $L(\gamma)=\alpha(\gamma)\cup \omega(\gamma)$ is
the limit set of $\gamma$, and it is called {\it dense } if
$ L(\gamma) =\mathbb M$.

In this paper is given an example of an embedded torus
(deformation of the Clifford torus) with both asymptotic
foliations  having all their  leaves dense.

  \section{Preliminary Calculations}

In this section will be obtained the variational equations of a
quadratic differential equation to be  applied in the analysis
 in Section  \ref{sc:m1}.

\begin{proposition}\label{prop:eqee} Consider a one parameter family of  quadratic differential equations of the form

\begin{equation} \aligned \label{eq:qe} a&(u,v,\epsilon) dv^2+ 2b(u,v,\epsilon) dudv+ c(u,v,\epsilon)du^2=0,\\
a&(u,v,0)=c(u,v,0)=0, \;\; b(u,v,0)=1.
\endaligned
 \end{equation}

Let $v(u,v_0,\epsilon)$ be a solution of \eqref{eq:qe} with
$v(u,v_0,0)=v_0$ and $u(u_0,v,\epsilon)$ solution of \eqref{eq:qe}
with $u(u_0,v,0)=u_0$. Then the following variational equations
holds:

\begin{equation}\label{eq:veep}\aligned
   c_\epsilon \; & + 2  v_{\epsilon u}=0,\;\;\; \;\;\;\;\; \;\;  a_\epsilon \;   + 2  u_{\epsilon v}=0,  \\
  c_{\epsilon\epsilon}& +2c_{v\epsilon} v_\epsilon -2 b_\epsilon c_\epsilon  +2  v_{u\epsilon\epsilon}=0,\\
  a_{\epsilon\epsilon}& +2a_{u\epsilon} u_\epsilon -2 b_\epsilon a_\epsilon  +2  u_{v\epsilon\epsilon}=0.
\endaligned
\end{equation}

\end{proposition}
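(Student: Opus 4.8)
The plan is to treat each one-parameter family of solutions as an identity in $(u,\epsilon)$, obtained by substituting it into \eqref{eq:qe} written in nonparametric form, and then to read off the variational equations by successive implicit differentiation in $\epsilon$ evaluated at $\epsilon=0$. Dividing \eqref{eq:qe} by $du^2$ and by $dv^2$ respectively puts it in the two forms
\begin{equation*}
a\,v_u^2+2b\,v_u+c=0, \qquad a+2b\,u_v+c\,u_v^2=0,
\end{equation*}
where $v_u=\partial v/\partial u$ along the first family and $u_v=\partial u/\partial v$ along the second. At $\epsilon=0$ the normalization $a=c=0$, $b=1$ forces $v_u=0$ and $u_v=0$, consistent with $v(u,v_0,0)=v_0$ and $u(u_0,v,0)=u_0$. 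The observation that makes the whole computation manageable is that $a(u,v,0)=c(u,v,0)=0$ and $b(u,v,0)=1$ hold identically in $(u,v)$; hence every partial derivative of $a$, of $c$, and of $b-1$ taken purely in the spatial variables $u,v$ vanishes at $\epsilon=0$ (e.g.\ $a_u=a_v=c_u=c_v=b_u=b_v=0$ and $c_{vv}=0$ there), and only derivatives carrying at least one $\epsilon$ can survive.

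For the first-order equations I would substitute $v=v(u,v_0,\epsilon)$ into $a\,v_u^2+2b\,v_u+c=0$, producing an identity in $(u,\epsilon)$, differentiate once in $\epsilon$ by the chain rule, and set $\epsilon=0$. Using $v_u=0$, $a=c=0$, $b=1$ together with $c_v=0$, every term but two collapses, leaving $c_\epsilon+2v_{\epsilon u}=0$. The identical argument applied to $a+2b\,u_v+c\,u_v^2=0$ along the second family yields $a_\epsilon+2u_{\epsilon v}=0$.

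For the second-order equations I would differentiate the same identity once more in $\epsilon$ and evaluate again at $\epsilon=0$, discarding at sight every term that retains a factor $v_u$ (which is $0$) or a factor that is $a$, $c$, or a purely spatial derivative of either. The group coming from $a\,v_u^2$ then contributes nothing, since each of its terms keeps such a factor. The group $c$ contributes $c_{\epsilon\epsilon}+2c_{v\epsilon}v_\epsilon$, with $c_v=c_{vv}=0$ killing the rest. The group $2b\,v_u$ contributes $4b_\epsilon v_{\epsilon u}+2v_{\epsilon\epsilon u}$, the factor $4$ arising because the two surviving cross terms both come from its second $\epsilon$-derivative (here $b_v=0$ and $b=1$ are used). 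Finally, substituting the first-order relation $v_{\epsilon u}=-c_\epsilon/2$ replaces $4b_\epsilon v_{\epsilon u}$ by $-2b_\epsilon c_\epsilon$, and collecting gives
\begin{equation*}
c_{\epsilon\epsilon}+2c_{v\epsilon}v_\epsilon-2b_\epsilon c_\epsilon+2v_{u\epsilon\epsilon}=0,
\end{equation*}
while the same steps along the second family, with $a$, $u$ in place of $c$, $v$, produce $a_{\epsilon\epsilon}+2a_{u\epsilon}u_\epsilon-2b_\epsilon a_\epsilon+2u_{v\epsilon\epsilon}=0$.

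I expect the only genuine difficulty to be the second-order bookkeeping: one must apply the chain rule honestly because $a,b,c$ are evaluated along the moving curve $v=v(u,v_0,\epsilon)$, and one must notice that $b_\epsilon$ enters twice with the same sign, which is exactly what produces the coefficient $-2$ in $-2b_\epsilon c_\epsilon$ rather than $-1$. Once the vanishing of all purely spatial derivatives of $a$, $c$ and $b-1$ at $\epsilon=0$ is exploited systematically and the first-order relation is fed back in, nothing beyond the chain rule remains.
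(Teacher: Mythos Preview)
Your proposal is correct and follows essentially the same approach as the paper: write \eqref{eq:qe} in the nonparametric form $a\,v_u^2+2b\,v_u+c=0$, substitute the solution, differentiate once and then twice in $\epsilon$, evaluate at $\epsilon=0$, and feed the first-order relation $v_{\epsilon u}=-c_\epsilon/2$ back into the second-order identity to convert $4b_\epsilon v_{\epsilon u}$ into $-2b_\epsilon c_\epsilon$. Your explicit observation that all purely spatial derivatives of $a$, $c$, and $b-1$ vanish at $\epsilon=0$ makes the bookkeeping cleaner than in the paper, but the argument is the same.
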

\begin{proof}

Differentiation with respect to $\epsilon$
of \eqref{eq:qe} written as
$$a(u,v,\epsilon) (\frac{dv}{du})^2+2 b(u,v,\epsilon) \frac{dv}{du}+c(u,v,\epsilon)=0,\;\;\; v(u,v_0,0)=v_0,$$
 taking into account that
 $ \text{\small $ a_v=\frac {\partial a}{\partial v}, \;
a_\epsilon =\frac {\partial a}{\partial \epsilon },\;\;
a_{\epsilon u}= a_{u \epsilon  }=\frac {\partial^2 a}{\partial
\epsilon \partial u}=\frac {\partial^2 a}{
\partial u \partial\epsilon}, $}  $ \;
 leads to:

\begin{equation}\label{eq:ve}
   \text{\small $
(a_\epsilon +a_v v_\epsilon )  (\frac{dv}{du})^2 + 2
a\frac{dv}{du} v_{\epsilon u}+2(b_\epsilon +b_v v_\epsilon
)\frac{dv}{du}+ 2b v_{\epsilon u}+c_\epsilon +c_v v_\epsilon =0.$}
\end{equation}

Analogous notation for
$b=b(u,v(u,v_0,\epsilon),\epsilon)$,
$c=c(u,v(u,v_0,\epsilon),\epsilon)$ and   for the solution
$v(u,v_0,\epsilon)$.

Evaluation of equation \eqref{eq:ve} at $\epsilon =0 $ results in:

$$ c_\epsilon  + 2  v_{\epsilon u}=0.$$

Differentiating twice the equation  \eqref{eq:qe} and evaluating
at $\epsilon=0$  leads to:
\begin{equation}\label{eq:vee}\aligned
 c_{\epsilon\epsilon}&+ 2c_{v\epsilon} v_\epsilon +4b_\epsilon v_{\epsilon u} +2b v_{u\epsilon\epsilon}=0,\\
  c_{\epsilon\epsilon}&+2c_{v\epsilon} v_\epsilon -2 b_\epsilon c_\epsilon  +2
  v_{u\epsilon\epsilon}=0.
\endaligned
\end{equation}

Similar calculation gives the variational equations for
$u_\epsilon$ and $u_{\epsilon\epsilon}$. This ends the proof.
\end{proof}

\section{ Double Recurrence for Asymptotic Lines}\label{sc:m1}

Consider the Clifford torus ${\mathcal C}=\mathbb
S^1(\frac{1}{\sqrt{2}})\times \mathbb
S^1(\frac{1}{\sqrt{2}})\subset \mathbb S^3$ parametrized by:
\begin{equation}\label{eq:c}
 C(u,v) = \frac{\sqrt{2}}{2}( \cos(-u+v),\sin(-u+v), \cos(u+v),\sin(u+v)), \end{equation}
where $C$ is defined in the square  $Q=\{(u,v): \;\; 0\leq u\leq
2\pi, \;\; 0\leq v\leq 2\pi \}.$

\begin{proposition}\label{prop:c}  The asymptotic lines on the Clifford torus in
the coordinates given by equation \eqref{eq:c} are
given by $dudv=0$, that is, the asymptotic lines are the coordinate curves   (Villarceau circles). See Fig. \ref{fig:villa}. 

  \begin{figure}[htbp]
  \begin{center}
  \hskip 1cm
  \includegraphics[angle=0, height=3.5cm, width=5cm]{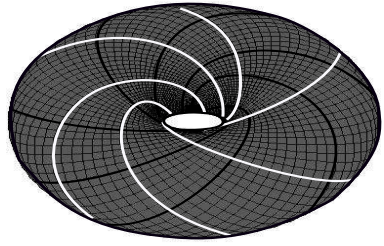}
  \caption{ Torus and  Villarceau circles  
  \label{fig:villa}}
    \end{center}
  \end{figure}

\end{proposition}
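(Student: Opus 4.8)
The plan is to compute the second fundamental form of the Clifford torus directly from the parametrization \eqref{eq:c} and show that the coefficients $e$ and $g$ (in the notation $II = e\,du^2 + 2f\,du\,dv + g\,dv^2$) vanish identically while $f$ does not, so that the asymptotic equation $e\,du^2 + 2f\,du\,dv + g\,dv^2 = 0$ reduces to $2f\,du\,dv = 0$, i.e.\ $du\,dv = 0$. This immediately gives the two asymptotic foliations $u = \text{const}$ and $v = \text{const}$ as the coordinate curves, which are the Villarceau circles.

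Concretely, I would first differentiate $C(u,v)$ to obtain $C_u$ and $C_v$, then check that $\langle C,C\rangle = 1$ (so the image lies in $\mathbb S^3$) and compute the first fundamental form coefficients $E = \langle C_u,C_u\rangle$, $F = \langle C_u,C_v\rangle$, $G = \langle C_v,C_v\rangle$; the half-angle combinations $-u+v$ and $u+v$ should make the cross term $F$ vanish and give constant $E$, $G$, confirming that $(u,v)$ are conformal-type coordinates and that $\sqrt{EG-F^2}$ is a nonzero constant. Next I would use the formulas quoted in the Introduction,
\begin{equation*}
 e=\frac{\det[C,C_u,C_v,C_{uu}]}{\sqrt{EG-F^2}},\quad
 f=\frac{\det[C,C_u,C_v,C_{uv}]}{\sqrt{EG-F^2}},\quad
 g=\frac{\det[C,C_u,C_v,C_{vv}]}{\sqrt{EG-F^2}},
\end{equation*}
and compute the three $4\times 4$ determinants with $C_{uu}$, $C_{uv}$, $C_{vv}$. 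The key algebraic fact I expect is that $C_{uu}$ and $C_{vv}$ lie in the span of $C$, $C_u$, $C_v$ (so their determinants vanish, giving $e = g = 0$), whereas $C_{uv}$ has a component orthogonal to that span, making $f \neq 0$.

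The computation is entirely routine trigonometry, so there is no genuine conceptual obstacle; the only thing to watch is bookkeeping in the $4\times 4$ determinants, which is why identifying the linear dependence of $C_{uu}$ and $C_{vv}$ on $\{C, C_u, C_v\}$ at the outset is worthwhile — it lets one conclude $e = g = 0$ without expanding those determinants in full. Once $e = g = 0$ and $f$ is shown to be a nonzero constant, the asymptotic equation factors as $2f\,du\,dv = 0$, the two kernels are $du = 0$ and $dv = 0$, and the asymptotic lines are exactly the coordinate curves $u = \text{const}$ and $v = \text{const}$. Finally I would remark that under the parametrization \eqref{eq:c} each coordinate circle is a great circle of $\mathbb S^3$ making a fixed angle with the factors, i.e.\ a Villarceau circle, completing the identification shown in Fig.~\ref{fig:villa}.
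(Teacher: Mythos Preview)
Your proposal is correct and follows essentially the same approach as the paper: compute the first and second fundamental form coefficients directly from the parametrization \eqref{eq:c}, obtain $E=G=1$, $F=0$, $e=g=0$, $f=1$, and conclude that the asymptotic equation is $du\,dv=0$. Your observation that $C_{uu}=C_{vv}=-C$ (hence in the span of $\{C,C_u,C_v\}$) is a nice shortcut for seeing $e=g=0$ without expanding the determinants, but the overall strategy is identical to the paper's.
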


\begin{proof} The coefficients of the first fundamental form $I = E du^2+2 F dudv+
Gdv^2$ and

the second fundamental form $II = e du^2+2 f dudv+ gdv^2$ of $C$ with
respect to the normal vector field $N= C\wedge C_u\wedge C_v  $   are given by:
$$\aligned
E(u,v) =& 1,\;\;\; \;\;\;  e(u,v)= 0, \\
F(u,v)=& 0,\;\;\; \;\;\;  f(u,v)=  1,\\
G(u,v) =& 1, \;\;\; \;\;\;  g(u,v)= 0.\endaligned
$$

Therefore the asymptotic lines are defined by $dudv=0$ and so they
are the coordinate curves.
  Fig. \ref{fig:villa} is the image  of the Clifford torus
 by a stereographic projection of $\mathbb S^3$ to $\mathbb R^3$. 
\end{proof}

 \begin{theorem} \label{th:1} There are  embeddings $\alpha:  \mathbb T^2\to  \mathbb S^3$ such
that all leaves of both asymptotic foliations, $\aff$ and $\afs$,
   are dense in $\mathbb T$. See Fig. \ref{fig:dcli}

\begin{figure}[htbp]
  \begin{center}
  \hskip 1cm
  \includegraphics[angle=0, height=7cm, width=9cm]{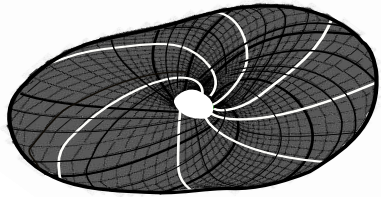}
  \caption{  Stereographic projection of a deformation of a   Clifford torus with $\epsilon=2/3$.
  \label{fig:dcli}}
    \end{center}
  \end{figure}
\end{theorem}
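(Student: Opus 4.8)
The plan is to construct $\alpha$ as a perturbation of the Clifford torus $C$ from \eqref{eq:c}, of the form $\alpha(u,v)=\alpha_\epsilon(u,v)$ depending on a small parameter $\epsilon$, chosen so that the deformed immersion remains an embedded torus in $\mathbb S^3$ and so that the asymptotic differential equation acquires exactly the qualitative structure governed by Proposition~\ref{prop:eqee}. Concretely, I would introduce the perturbation through the conformal factor or a controlled radial/angular modulation of $C$, writing the second fundamental form of $\alpha_\epsilon$ in the coordinates $(u,v)$ as
\begin{equation}\label{eq:pert}
II_\epsilon = a(u,v,\epsilon)\,dv^2 + 2b(u,v,\epsilon)\,du\,dv + c(u,v,\epsilon)\,du^2,
\end{equation}
where by Proposition~\ref{prop:c} the unperturbed coefficients satisfy $a(u,v,0)=c(u,v,0)=0$ and $b(u,v,0)=1$, matching the hypotheses of \eqref{eq:qe} exactly. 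The first task is to compute $a,b,c$ and their $\epsilon$-derivatives at $\epsilon=0$ from the explicit formulas for $e,f,g$ given in the introduction, verifying that the hyperbolic region fills out all of $\mathbb T^2$ for small $\epsilon$ (since $b(u,v,0)=1\neq 0$ forces $II_\epsilon$ indefinite, hence $\mathcal K_{ext}<0$), so that both asymptotic foliations are genuinely globally defined.

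Next I would pass to the return map. Because the perturbed asymptotic directions are $C^{r-2}$ line fields that are $C^0$-close to the coordinate directions $du=0$ and $dv=0$, each leaf of $\aff$ crosses the circle $\{u=0\}$ transversally, and similarly for $\afs$ and $\{v=0\}$; this yields two Poincar\'e return maps $P_{1,\epsilon},P_{2,\epsilon}:\mathbb S^1\to\mathbb S^1$ that are $C^{r-2}$ diffeomorphisms of the circle, equal to the identity at $\epsilon=0$ (the Villarceau circles close up). The strategy is then classical: density of every leaf is equivalent to the associated return map being conjugate to an irrational rotation and minimal, which by the Poincar\'e--Denjoy theory for $C^2$ (indeed $C^{r-2}$ with $r\geq 3$, so at least $C^1$; I would take $r$ large enough to invoke Denjoy) circle diffeomorphisms holds provided the rotation number is irrational and the map has no periodic orbits. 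I would use the variational equations \eqref{eq:veep} to expand the displacement function $P_{i,\epsilon}(\theta)-\theta$ in powers of $\epsilon$: the first-order term is controlled by $c_\epsilon$ and $v_{\epsilon u}$ (respectively $a_\epsilon$, $u_{\epsilon v}$), and the relations $c_\epsilon+2v_{\epsilon u}=0$, $a_\epsilon+2u_{\epsilon v}=0$ let me read off the leading displacement as an integral of $c_\epsilon$ (resp. $a_\epsilon$) around the circle.

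The main obstacle, and the heart of the construction, is choosing the deformation so that both return maps simultaneously have irrational rotation number and are conjugate to rotations — a Denjoy-type rigidity that must hold for all leaves of both foliations at once. The first-order averaged displacement typically vanishes (the integral of $c_\epsilon$ over a full period is zero when $c_\epsilon$ is a pure angular oscillation), so the genuine rotation-number variation is a second-order effect, which is precisely why the paper records the second-order variational equations for $c_{\epsilon\epsilon}$ and $a_{\epsilon\epsilon}$ in \eqref{eq:veep}. I would therefore expand to second order, compute the averaged second-order displacement as an explicit integral functional of the perturbation profile, and then design the profile — exploiting the independence of the two summands $-u+v$ and $u+v$ in \eqref{eq:c} — so that $P_{1,\epsilon}$ has a prescribed irrational rotation number and $P_{2,\epsilon}$ another, with both maps smooth enough to fall under Denjoy's theorem, guaranteeing minimality. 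The delicate point is that the two rotation numbers are coupled through the single parameter $\epsilon$ and the single perturbation, so I expect the core of the argument to be a careful choice of the deforming function (and possibly of $\epsilon=2/3$ as in Fig.~\ref{fig:dcli}) ensuring that neither return map degenerates to a rational rotation; verifying irrationality and the absence of periodic leaves for both foliations simultaneously is where the real work lies, while embeddedness and the $C^{r-2}$ regularity needed for Denjoy are comparatively routine consequences of taking the perturbation small and smooth.
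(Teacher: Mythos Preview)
Your overall strategy matches the paper's: perturb the Clifford torus normally, use Proposition~\ref{prop:eqee} to expand the Poincar\'e maps $\pi^i_\epsilon$ to second order in $\epsilon$, and choose $\epsilon$ so that the rotation numbers are irrational, then invoke Denjoy. You also correctly anticipate that the first-order displacement integrates to zero and that the real information sits at second order.

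What you are missing is the single idea that dissolves the ``delicate point'' you flag at the end. The paper does not try to control two independent rotation numbers. Instead it chooses the perturbing function $h(u,v)=\sin^2\bigl(2(v-u)\bigr)$, which satisfies $h(u,v)=h(v,u)$; a short check with the explicit formulas for $e,f,g$ then gives $e(u,v,\epsilon)=g(u,v,\epsilon)$ and $f(u,v,\epsilon)=f(v,u,\epsilon)$. Thus the asymptotic equation is invariant under the involution $(u,v)\mapsto(v,u)$, which swaps the two foliations, and the two Poincar\'e maps $\pi^1_\epsilon$ and $\pi^2_\epsilon$ are conjugate by an isometry. They therefore have the \emph{same} rotation number, and one computation settles both. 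Concretely, with this $h$ one finds $v_\epsilon(2\pi,v_0,0)-v_\epsilon(0,v_0,0)=0$ and $v_{\epsilon\epsilon}(2\pi,v_0,0)-v_{\epsilon\epsilon}(0,v_0,0)=-24\pi$, a constant independent of $v_0$; hence $\pi^1_\epsilon(v_0)=v_0-12\pi\epsilon^2+O(\epsilon^3)$ and the common rotation number varies monotonically in $\epsilon$, so it is irrational for a dense set of small $\epsilon$. Your plan to ``design the profile'' so that the two return maps have separately prescribed irrational rotation numbers is unnecessary and, as stated, not carried out; the symmetry trick is the missing step.

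Two minor corrections: the value $\epsilon=2/3$ in the figure caption is only for the illustration, not for the proof, which requires $\epsilon$ small; and since the perturbation is trigonometric (hence $C^\infty$), the return maps are $C^\infty$ and Denjoy applies without needing to tune $r$.
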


\begin{proof}
Let $N(u,v)=(\alpha \wedge  \alpha_u\wedge \alpha_v)  /|\alpha
\wedge  \alpha_u\wedge \alpha_v | (u,v)$ be the unit  normal
vector
to the Clifford torus.

We have that,
$$N(u,v)=\frac{\sqrt{2}}2 \big(\cos(-u+v),\sin(-u+v),-\cos(u+v),-\sin(u+v)\big).$$
Let $c(u,v)=h(u,v) N(u,v)$, $h$ being a smooth $2\pi-$ double
periodic function,  and consider for $\epsilon\ne 0 $ small the
one parameter family of embedded torus
\begin{equation}\label{eq:ceh} \alpha_\epsilon(u,v)=\frac{    C(u,v)+\epsilon c(u,v) }{|C(u,v)+\epsilon c(u,v)|}.\end{equation}
 Then the coefficients of the second fundamental
 form of $\alpha_\epsilon$ with respect to  $N_\epsilon=\alpha_\epsilon\wedge
(\alpha_\epsilon)_u\wedge (\alpha_\epsilon)_v
/|\alpha_\epsilon\wedge (\alpha_\epsilon)_u\wedge
(\alpha_\epsilon)_v|$, \;   after multiplication by  \;$1/(1+\epsilon^2 h^2)^2$,
  are given by:

\begin{equation}\label{eq:2h}\aligned
e=&    \epsilon h_{uu}+ 2 \epsilon^2 h_u h_v+ \epsilon^3(2hh_u^2-h^2 h_{uu} ),  \\
f=& 1+\epsilon h_{uv}+\epsilon^2 ( h_u^2+ h_v^2-h^2)+\epsilon^3(2hh_uh_v-h^2 h_{uv})+\epsilon^4 h^4, \\
g=&   \epsilon h_{vv}+ 2\epsilon^2 h_uh_v+\epsilon^3(2hh_v^2-h^2 h_{vv} ).
\endaligned\end{equation}

By proposition \ref{prop:eqee} the variational equations of the
implicit differential equation
\begin{equation}\label{eq:va} e(u,v,\epsilon)+2f(u,v,\epsilon)\frac{dv}{du} +g(u,v,\epsilon) (\frac{dv}{du})^2=0,\end{equation}
with $e(u,v,0)=g(u,v,0)=0$, $f(u,v,0)=1$ and $v(u,v_0,0)=v_0$ are
given by:
\begin{equation}\label{eq:vali} e_\epsilon \;   + 2  v_{\epsilon u}=0,\;\;\;
  e_{\epsilon\epsilon}  +2e_{v\epsilon} v_\epsilon -2 f_\epsilon e_\epsilon  +2  v_{u\epsilon\epsilon}=0.\end{equation}

  In fact, differentiating equation \eqref{eq:va} with respect to $\epsilon $ it is obtained:

\begin{equation} \label{eq:v1} \aligned (e_v (u,v,\epsilon) v_{\epsilon} +&e_{\epsilon} (u,v,\epsilon))+2(f_v(u,v,\epsilon) v_{\epsilon}+f_{\epsilon} (u,v,\epsilon))v_u+2f v_{u\epsilon } (u,v,\epsilon)\\
+&(g_v(u,v,\epsilon) v_\epsilon+g_\epsilon(u,v,\epsilon))(v_u)^2+ 2g(u,v,\epsilon) v_u v_{u\epsilon}=0.\endaligned\end{equation}
Making $\epsilon=0$ leads to equation $e_\epsilon \;   + 2  v_{\epsilon u}=0. $

Differentiation of  equation \eqref{eq:v1} with respect to $\epsilon$ and evaluation at  $\epsilon=0$  leads to
$$ e_{\epsilon\epsilon}  +2e_{v\epsilon} v_\epsilon -2 f_\epsilon e_\epsilon  +2  v_{u\epsilon\epsilon}=0 .$$ 

  Therefore, the integration of the linear differential equations   \eqref{eq:vali} leads to: 
\begin{equation}\label{eq:li} \text {\small $ v_\epsilon (u)= -\frac 12\int_0^u h_{uu} du,\;\;\;
v_{u\epsilon\epsilon }  =  \frac 12 h_{uuv} \int_0^u h_{uu} du \;
+\;  h_{uu}h_{uv}-2h_uh_v. $ } \end{equation}
Taking $h(u,v)=\sin^2(2v-2u)$,  it results from equation
\eqref{eq:2h} that:

\begin{equation}\label{eq:si}
e(u,v,\epsilon)=e(v,u,\epsilon)=
g(u,v,\epsilon),\;\;f(u,v,\epsilon)=f(v,u,\epsilon).
\end{equation}

In fact, from the definition of $h$ it follows that:
$$\aligned h(u,v)=& h(v,u),\; \;\;  h_u=-h_v=-2 \sin(4v-4u),\\
 h_{uu}=& h_{vv}=8\cos(4v-4u), \;\; h_{uv}=-8\cos(4v-4u).\endaligned $$

So,  a careful calculation  shows that  equation \eqref{eq:si} follows from equation \eqref{eq:2h}.

  So, from equation \eqref{eq:li}, it follows that 

$$\aligned v_\epsilon (u,v_0,0)=& - \sin(4v_0)-\sin(4v_0-4u),\\
v_{\epsilon\epsilon}(u,v_0,0)=&  -12u-4\sin( 4u)-4\sin(8v_0-4u),\\
\;&\;\;-\frac 52\sin(8v_0) + \frac{13}2\sin(8v_0-8u).\endaligned$$

Therefore,
$$  \text{\small $   v_{\epsilon}(2\pi,v_0,0)-v_{\epsilon}(0,v_0,0)= 0,\;\; v_{\epsilon\epsilon}(2\pi,v_0,0)-v_{\epsilon\epsilon}(0,v_0,0)= -24\pi$}.$$

Consider  the Poincar\'e map $\pi^1_\epsilon: \{u=0\}\to
\{u=2\pi\}$, relative to the asymptotic foliation $\aff$,
  defined by
$\pi^1_\epsilon(v_0)=v(2\pi, v_0,\epsilon)$.

Therefore,  $\pi^1_0=Id$ and it has the following expansion in
$\epsilon$:

$$\aligned \pi^1_\epsilon(v_0 )=& v_0+ \frac{\epsilon^2} 2 v_{\epsilon\epsilon}(2\pi,v_0,0) +O(\epsilon^3)\\
=& v_0-12 \pi   \epsilon^2+O(\epsilon^3)\endaligned$$
 and so the  rotation number of $\pi^1_\epsilon$ changes continuously and monotonically  with $\epsilon$.

By the  symmetry of the coefficients of the second fundamental
form in the variables $(u,v)$ and the fact that
$e(u,v,\epsilon)=g(u,v,\epsilon)$, see equation \eqref{eq:si}, it
follows that the Poincar\'e map $\pi^2_\epsilon: \{v=0\}\to
\{v=2\pi\}$, relative to the asymptotic  foliation $\afs$, defined
by
 $\pi^2_\epsilon(u_0)=u(u_0,2\pi, \epsilon)$    is conjugated to $\pi^1_\epsilon$ by an isometry.

 Therefore we can take $\epsilon \ne 0$ small such that
 the  rotation numbers of $\pi^i_\epsilon$, $i=1,2$, are, modulo $2\pi$, irrational.
  Therefore all orbits of $\pi^i$, $i=1,2$, are dense in $\mathbb S^1$.
 See  (Katok and  Hasselblatt  1995, chapter 12)   or (Palis and Melo  1982, chapter  4). This ends the proof.
\end{proof}

\section{Concluding Comments}

In this paper it  was shown that there exist   embeddings of the
torus in $\mathbb S^3$  with both asymptotic foliations having all
their leaves dense.

In (Garcia and Sotomayor 2008b) is given an example of an embedded torus in $\mathbb R^3$ with both principal  foliations having all their leaves dense.

The  technique used here  is  based on the second order
perturbation of differential equations.

It is worth mentioning
that the consideration of  only the first variational equation was
was technically  insufficient  to achieve the results of this paper.
 The same can be said for the technique of local bumpy perturbations of the Clifford Torus.

\section*{Acknowledgement}
 The authors are grateful to L.
F. Mello   for his  helpful comments. 
 
 The  authors are fellows of CNPq and
done this work under the project CNPq 473747/2006-5.

\vskip 1.5cm
 {\small
\author{\noindent Ronaldo Garcia\\Instituto de Matem\'{a}tica e Estat\'{\i}stica,\\
Universidade Federal de Goi\'as,\\CEP 74001-970, Caixa Postal
131,\\Goi\^ania, GO, Brazil}}
\vskip .5cm
 {\small\author{\noindent Jorge Sotomayor\\Instituto de 
Matem\'{a}tica e Estat\'{\i}stica,\\Universidade de S\~{a}o Paulo,
\\Rua do Mat\~{a}o 1010, Cidade Universit\'{a}ria, \\CEP 05508-090, S\~{a}o Paulo, S.P., Brazil }

\end{document}